\newtheorem*{theorem*}{Theorem}
\newtheorem{observation}{Observation}
\newtheorem{proposition}{Proposition}
\newtheorem{corollary}{Corollary}
\theoremstyle{remark}
\newtheorem{remark}{Remark}
\theoremstyle{definition}
\newtheorem{definition}{Definition}
\title{Loci of the Brocard Points over\\Selected Triangle Families}
\author{Ronaldo Garcia} 
\author{Dan Reznik} 
\date{September, 2020}
\begin{document}

\maketitle

\begin{abstract}
We study the loci of the Brocard points over two selected families of triangles: (i) 2 vertices fixed on a circumference and a third one which sweeps it, (ii) Poncelet 3-periodics in the homothetic ellipse pair. Loci obtained include circles, ellipses, and teardrop-like curves. We derive expressions for both curves and their areas. We also study the locus of the vertices of Brocard triangles over the homothetic and Brocard-poristic Poncelet families. 

\vskip .3cm
\noindent\textbf{Keywords} Poncelet, Brocard, Homothetic, Porism, Locus
\vskip .3cm
\noindent \textbf{MSC} {53A04 \and 51M04 \and 51N20}
\end{abstract}

\section{Introduction}
\label{sec:intro}
The Brocard points $\Omega_1$ and $\Omega_2$ \cite{johnson1960} are two unique interior points through which Cevians at a special angle $\omega$ to the sides concur; see Figure~\ref{fig:brocard-basic}. In triangle parlance, they are known as a {\em bicentric pair} \cite{kimberling2003-bicentric}. Here we study the loci of $\Omega_1,\Omega_2$ over a family of triangles over two selected families of triangles, to be sure:

\begin{figure}
    \centering
    \includegraphics[width=.6\textwidth]{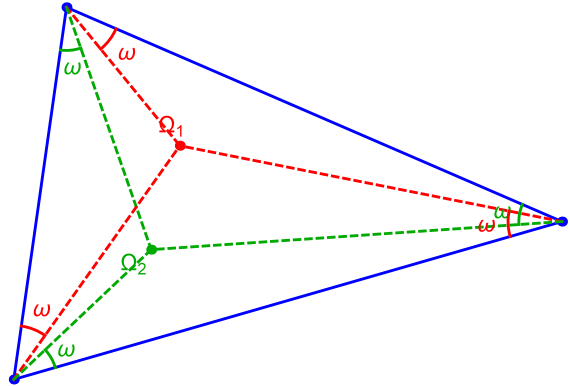}
    \caption{The Brocard Points $\Omega_1$ (resp. $\Omega_2$) are where sides of a triangle concur when rotated about each vertex by the Brocard angle $\omega$. When sides are traversed and rotated clockwise (resp. counterclockwise), one obtains $\Omega_1$ (resp. $\Omega_2$).}
    \label{fig:brocard-basic}
\end{figure}
 
 \begin{itemize}
     \item Circle-Mounted: two vertices fixed at points on a circumference (or ellipse boundary) and a third one which slides over it.
     \item Homothetic Pair: Poncelet 3-Periodics interscribed in a homothetic, concentric pair of ellipses. This family conserves the Brocard angle \cite{reznik2020-similarityII}.
 \end{itemize}
 
 Under the above Brocard loci include a circle, teardrop-curves, and variations. We derive explicit expressions for the loci and study their area ratios with respect to the generating curves.
 
 We also study the locus of vertices of the Brocard Triangles\footnote{The 7th Brocard was invented by one author and Peter Moses during this research: its vertices are the intersections of cevians through $X_3$ with the Brocard Circle. The inspiration was the 2nd Brocard whose vertices lie at intersections of cevians through $X_6$ with said circle \cite[Second Brocard Triangle]{mw}.} (there are 7 of them, see \cite{gibert2020-brocard}) under two families: homothetic and Brocard-poristic (whose Brocard points are stationary \cite{bradley2007-brocard}). In the former case the only the vertices of the First Brocard is an ellipse, and in the latter 4 out of 7 describe circles, both results stemming directly from known homotheties of said triangles.

\subsection{Related Work}

Ferréol describes a construction for a right strophoid as the locus of the orthocenter of a family of triangles with two fixed vertices and a third one revolving on a circumference \cite{ferreol-strophoid}. Odehnal has studied loci of triangle centers for the poristic triangle family \cite{odehnal2011-poristic}; similarly, Pamfilos proves properties of the family of triangles with fixed 9-point and circumcircle \cite{pamfilos2020}. We have studied the loci of triangle centers for 3-periodics in the elliptic billiard (confocal Poncelet pair), identifying a few centers whose loci are elliptic  \cite{garcia2020-ellipses}, and built and interactive app to visualize loci of centers of several triangle families \cite{darlan2020-app}.

Below when referring to triangle centers we adopt Kimberling's $X_k$ notation \cite{etc}.

\section{Circle-Mounted Triangles}
\label{sec:circle-mounted}
Let a family of triangles be defined with two vertices $V_1,V_2$ stationary with respect to a circle of radius $a$ (say centered at the origin) and a third one $P(t)$ which executes on revolution over the the circumference, $P(t)=a[\cos{t},\sin{t}]$.

Referring to Figure~\ref{fig:brocard-drops-mixed} (left):

\begin{proposition}
The locus of $\Omega_1$ (resp. $\Omega_2$) with $V_1=(0,0)$ and $V_2=(0,a)$ is a circle of radius $\frac{a}{3}$ (resp. a teardrop curve) of area $\frac{\pi a^2}{9}$ 
(resp. $\frac{2\pi a^2}{9}$).
\end{proposition}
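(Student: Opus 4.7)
The plan is to parametrize the family, apply the standard barycentric formulas for the Brocard points, and then either extract an algebraic equation for the locus (for $\Omega_1$) or invoke Green's theorem (for the area enclosed by the locus of $\Omega_2$).

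First I would record the side lengths. Because both $V_2$ and $V_3(t)=a(\cos t,\sin t)$ lie at distance $a$ from $V_1$, the triangle is isoceles for every $t$: writing $a_i$ for the side opposite $V_i$, one has $a_2=a_3=a$ and $a_1=|V_2V_3|=a\sqrt{2(1-\sin t)}$. This isoceles symmetry is what drives the simplifications to follow. Substituting into the well-known barycentric expressions
\[
\Omega_1=(a_1^2 a_3^2 : a_1^2 a_2^2 : a_2^2 a_3^2), \qquad
\Omega_2=(a_1^2 a_2^2 : a_2^2 a_3^2 : a_1^2 a_3^2),
\]
the common factor $a^4$ cancels and each Brocard point becomes a convex combination of the $V_i$ with weights rational in $\sin t$, yielding closed-form parametrizations $\Omega_i(t)$. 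In particular $\Omega_1(t)=a(\cos t,\,2-\sin t)/(5-4\sin t)$.

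For the circle claim I would eliminate $t$ from $\Omega_1(t)$. Using the identity $\cos^2 t+(2-\sin t)^2=5-4\sin t$ one finds $x^2+y^2=a^2/(5-4\sin t)$; combined with the linear relation $(4y-a)(5-4\sin t)=3a$ read off from the $y$-coordinate, this forces $3(x^2+y^2)=a(4y-a)$, i.e.\ $x^2+(y-2a/3)^2=(a/3)^2$. Thus the locus of $\Omega_1$ is a circle of radius $a/3$ centered at $(0,2a/3)$, of area $\pi a^2/9$, settling the first half of the proposition.

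For $\Omega_2$, parameter elimination gives a higher-degree algebraic curve rather than a circle; one can verify that as $t\to\pi/2$ (where $V_3\to V_2$ and the triangle degenerates) $\Omega_2$ approaches $V_2$ and the curve develops a cusp, producing the teardrop shape. The enclosed area is computed by Green's theorem, $A=\tfrac12\oint_0^{2\pi}(x\,\dot y-y\,\dot x)\,dt$. The main obstacle is this integral: the integrand is a rational function in $\sin t,\cos t$ with denominator $(5-4\sin t)^2$, and the cleanest route is the Weierstrass substitution $u=\tan(t/2)$, converting the integrand to a rational function on the real line that can be handled by partial fractions or residues. The cusp at $t=\pi/2$ appears as the limit $u\to\infty$ but does not affect the definite value, which should evaluate to $2\pi a^2/9$.
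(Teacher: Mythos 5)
Your approach is essentially the paper's: its entire proof consists of stating the two parametrizations $\Omega_1(t)=a(\cos t,\,2-\sin t)/(5-4\sin t)$ and $\Omega_2(t)=a(2\cos t-\sin 2t,\,2\sin t+\cos 2t)/(5-4\sin t)$, which you recover correctly from the barycentric formulas via the isoceles observation $a_2=a_3=a$, and your elimination to $x^2+(y-2a/3)^2=(a/3)^2$ together with the Green's-theorem integral supply exactly the verification the paper leaves implicit (the integrand collapses to $6a^2(1-\sin t)/(5-4\sin t)^2$, and the standard integrals $\int_0^{2\pi}(5-4\sin t)^{-1}dt=2\pi/3$, $\int_0^{2\pi}(5-4\sin t)^{-2}dt=10\pi/27$ yield $2\pi a^2/9$). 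One trivial slip: under $u=\tan(t/2)$ the cusp parameter $t=\pi/2$ maps to $u=1$, not $u\to\infty$ (that limit corresponds to $t=\pi$, where the curve is regular), but this affects nothing.
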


\begin{proof}

In this case we have that:
\[\Omega_1(t)=a\left[ {\frac {\cos t }{5-4\,\sin t }},{
\frac { 2-a\sin t  }{5-4\,\sin t }}\right]
\]

\[\Omega_2(t)=a\left[\frac { 2\cos t-\sin 2t   }{ 5-4\,
\sin t },{\frac {    2 \sin t +\cos
 2t  }{5-4\,\sin t}}\right]\]

\end{proof}

\begin{remark}
The above loci intersect at $a[\pm\sqrt{3}/6,1/2]$; along with $V_2=(0,a)$ they define an equilateral. This stems from the fact that when $P(t)=a[\pm\sqrt{3}/2,1/2]$, $V_1V_2P(t)$ is equilateral and the two Brocard points coincide at the Barycenter $X_2$.
\end{remark}
Referring to Figure~\ref{fig:brocard-drops-mixed} (right):

\begin{proposition}
The locus of $\Omega_1$ and $\Omega_2$ with $V_1=(0,a)$ and $V_2=(a,0)$ are a pair of inversely-identical\footnote{Identical modulo inverse similarity \cite[Inversely Similar]{mw}.}, skewed teardrop shapes with the following equations and areas:
\end{proposition}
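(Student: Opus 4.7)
The plan is to follow the same recipe that produced the formulas in the previous proposition: derive explicit parametrizations $\Omega_1(t)$ and $\Omega_2(t)$ as the moving vertex $P(t)=a(\cos t,\sin t)$ sweeps the circle, then extract implicit equations for the loci and compute the enclosed areas.

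First I would compute the three squared side lengths of the triangle $V_1 V_2 P(t)$,
\[
|V_1 V_2|^2 = 2a^2, \qquad |V_1 P|^2 = 2a^2(1-\sin t), \qquad |V_2 P|^2 = 2a^2(1-\cos t),
\]
and substitute them into the standard barycentric coordinates of the Brocard points. Writing $\alpha,\beta,\gamma$ for the sides opposite $V_1,V_2,V_3$,
\[
\Omega_1 = \bigl(\tfrac{1}{\beta^{2}}:\tfrac{1}{\gamma^{2}}:\tfrac{1}{\alpha^{2}}\bigr), \qquad \Omega_2 = \bigl(\tfrac{1}{\gamma^{2}}:\tfrac{1}{\alpha^{2}}:\tfrac{1}{\beta^{2}}\bigr).
\]
Converting to Cartesian yields rational-trigonometric parametrizations $\Omega_i(t)=(x_i(t),y_i(t))$ sharing a common denominator polynomial in $\sin t$ and $\cos t$. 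This is the direct analogue of the $(5-4\sin t)^{-1}$ factor that appeared in the previous proof.

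Second, I would exploit the obvious $\sigma\colon(x,y)\mapsto(y,x)$ symmetry of the configuration. This reflection fixes the circle, swaps $V_1$ with $V_2$, and sends $P(t)$ to $P(\pi/2-t)$; since reflections reverse orientation and the two Brocard points are interchanged by orientation reversal, $\sigma$ carries the locus of $\Omega_1$ onto the locus of $\Omega_2$. Hence the two curves are mirror images across the line $y=x$ — inversely identical with similarity ratio $1$ — and in particular have equal area. This halves the subsequent work: only one locus and one area need be computed directly.

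Third, to extract the implicit equation I would apply the Weierstrass substitution $u=\tan(t/2)$, clear denominators, and take the resultant of $x_1(t)-x=0$ and $y_1(t)-y=0$ with respect to $u$. This produces a polynomial $F(x,y)=0$ cutting out the teardrop locus of $\Omega_1$, and $F(y,x)=0$ then cuts out the locus of $\Omega_2$. The enclosed area is obtained from the parametric formula
\[
\mathrm{Area} \;=\; \tfrac{1}{2}\int_0^{2\pi}\bigl(x_i(t)\,y_i'(t) - y_i(t)\,x_i'(t)\bigr)\,dt,
\]
whose integrand is rational in $\sin t$ and $\cos t$, evaluated either by residues on the unit circle or by a second Weierstrass substitution.

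The main obstacle is algebraic bulk. Unlike the $V_1=(0,0),V_2=(0,a)$ case, where the denominator was the neat linear expression $5-4\sin t$, here the denominator now involves both $\sin t$, $\cos t$, and the cross term $\sin t\cos t$, so the resultant that eliminates $t$ is likely to be a high-degree irreducible polynomial and the area integrand a rational function whose partial-fraction decomposition is not transparent by hand. A computer algebra system is effectively mandatory both for producing $F(x,y)$ in a presentable form and for reducing the area integrals to the closed-form expressions that the proposition asserts.
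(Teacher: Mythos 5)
Your proposal is correct and follows essentially the same route as the paper: derive explicit rational--trigonometric parametrizations of $\Omega_1(t)$ and $\Omega_2(t)$ (which the paper simply records) and then use the reflection $D(x,y)=(y,x)$ about the diagonal to carry one locus onto the other, which is exactly the inverse-identity claim. Your conceptual justification of that symmetry --- the reflection fixes the circle, swaps $V_1$ and $V_2$, and, being orientation-reversing, interchanges the bicentric pair $\Omega_1,\Omega_2$ --- is a cleaner derivation of the identity $\Omega_2(t)=(D\circ\Omega_1)(t-\tfrac{\pi}{2})$ that the paper merely asserts, and the remaining implicitization/area steps are the routine CAS work the paper leaves implicit.
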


\begin{proof}
We have that:
\[ \Omega_1(t)=a\left[{\frac {       \sin^2 t   +\cos
 t-\sin t    }{ \left( \sin
 t-2 \right) \cos t-2\,\sin   t
 +3}}, \frac {    1-\cos t }{
 \left( \sin t-2 \right) \cos t-2\,
\sin t+3}\right]
\]

\[ \Omega_2(t)=a
\left[{\frac {    1-\sin t   }{ \left( \cos
 t-2 \right) \sin t-2\,\cos   t
   +3}},{\frac { \left(   \cos^2 t  
 -\cos t+\sin t \right) }{ \left( 
\sin t-2 \right) \cos t-2\,\sin
 t+3}}\right]\]
 
 Defining $D(x,y)=(y,x)$, the reflection abott the diagonal,
 it follows that $\Omega_2(t)=(D\circ \Omega_1)(t-\frac{\pi}{2}).$

\end{proof}

In Figure~\ref{fig:brocard-circle} we show the shape of the locus varies in a complicated way when $V_2=(0,a)$ and $V_1=(x,0)$, with $0{\leq}x{\leq}a$.




Referring to Figure~\ref{fig:brocard-antipodes} (left), Robert Ferréol has kindly contributed \cite{ferreol2020-private-larme}:

\begin{proposition}
With $V_1=(-a,0)$ and $V_2=(a,0)$, the loci of the Brocards are a pair of inversely-identical teardrop shapes whose areas are $\pi a^2/\sqrt{5}$. The one with a cusp on $V_1$ is given by the following quartic:

\begin{equation}
     x^4-2 x^3+2 y^2 x^2+2 x-2 y^2 x-1+y^4+4 y^2=0 
\end{equation}
\end{proposition}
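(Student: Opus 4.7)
The plan is to mirror the template of the two preceding propositions: derive closed-form parametric expressions for $\Omega_1(t)$ and $\Omega_2(t)$, then pass to the implicit quartic and compute the area. With $V_1=(-a,0)$, $V_2=(a,0)$, and $P(t)=a(\cos t,\sin t)$, the triangle side lengths are $|V_2P|=a\sqrt{2-2\cos t}$, $|V_1P|=a\sqrt{2+2\cos t}$, and $|V_1V_2|=2a$. The half-angle identities $1-\cos t=2\sin^2(t/2)$ and $1+\cos t=2\cos^2(t/2)$ eliminate the square roots, and substituting the resulting side lengths into the standard barycentric formula for the Brocard points produces rational expressions in $\cos t$ and $\sin t$ for each coordinate of $\Omega_1(t)$ and $\Omega_2(t)$, fully analogous to the explicit formulas of the two earlier propositions.

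To obtain the implicit equation I would apply the Weierstrass substitution $s=\tan(t/2)$, so that the two coordinates of $\Omega_1$ become rational functions $X(s)/W(s)$ and $Y(s)/W(s)$ of $s$. The resultant $\operatorname{Res}_s\!\bigl(X(s)-x\,W(s),\,Y(s)-y\,W(s)\bigr)$ vanishes on the locus; after clearing the extraneous factors coming from the denominators and from components at infinity, the irreducible component should be the claimed quartic (with $a=1$). A cheap consistency check: the singular point of the quartic sits at the vertex where $P(t)$ collides with a fixed vertex and the Brocard configuration degenerates --- this is the cusp visible in the figure.

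For the area I would invoke Green's theorem, $\text{Area}=\tfrac12\oint(x\,dy-y\,dx)$, and evaluate under the same Weierstrass substitution, reducing to an integral of a rational function of $s$ over $\mathbb{R}$. Partial fractions produce logarithms and arctangents; the factor $1/\sqrt{5}$ appears through a quadratic denominator with discriminant $-5$, whose arctangent antiderivative contributes a $\pi/\sqrt{5}$ jump between the limits. The companion teardrop follows by symmetry: reflection across the $x$-axis sends $P(t)\mapsto P(-t)$ and reverses the orientation of the triangle, interchanging $\Omega_1$ and $\Omega_2$, so the two loci are mirror images and in particular have the same area, which justifies the phrase \emph{inversely-identical}.

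The principal obstacle is the algebra in the elimination step: the raw resultant is a large polynomial with spurious factors that must be identified and discarded, and the parametric expressions are too heavy to expand cleanly by hand. A more pragmatic route is to posit a quartic ansatz in $(x,y)$ with undetermined coefficients, evaluate at several sample values of $t$, and solve the resulting linear system --- essentially what a computer algebra system does automatically when asked for the implicit form of a rational curve.
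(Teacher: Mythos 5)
Your overall strategy --- parametrize the Brocard points from the side lengths, implicitize by elimination, and compute the area by Green's theorem --- is exactly what the paper does; its proof simply records the CAS output: the parametrizations $\Omega_1(t),\Omega_2(t)$ with common denominator $\cos 2t-9$, the implicit quartic $B_2(x,y)=a^2(a^2-2ax-4y^2)+2ax(x^2+y^2)-(x^2+y^2)^2=0$ together with its mirror $B_1(x,y)=B_2(-x,y)$, and the value of $\tfrac12\oint (x\,dy-y\,dx)$. So in outline your proposal matches, and your remark about where the $\sqrt5$ comes from is consistent with the denominator $\cos 2t-9=2(\cos^2 t-5)$.

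One step, however, is wrong: the symmetry. Reflection across the $x$-axis does not exchange the two teardrops; it maps each of them to itself. It fixes $V_1$ and $V_2$ individually, and an isometry must carry a cusp to a cusp, so it cannot send the curve with cusp at $V_1$ to the one with cusp at $V_2$; equivalently, from the parametrization one checks that $\Omega_1(-t)$ is the $x$-axis reflection of $\Omega_1(t)$, so each algebraic locus is already symmetric about the $x$-axis. Your orientation-reversal argument does correctly show that the $x$-axis reflection swaps the roles of first and second Brocard point between the triangles at parameters $t$ and $-t$, but because the labelled parametrizations each sweep out a full $x$-symmetric teardrop, this yields no relation between the two quartics. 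The reflection that does exchange them is the one across the perpendicular bisector of $V_1V_2$, i.e.\ the $y$-axis: it is a symmetry of the configuration that swaps $V_1\leftrightarrow V_2$ and sends $P(t)\mapsto P(\pi-t)$, whence $\Omega_2(\pi-t)$ is the image of $\Omega_1(t)$ and $B_1(x,y)=B_2(-x,y)$. This is the map the paper uses, and it is what actually justifies \emph{inversely-identical} and the equality of the two areas (which your Green's-theorem computation would of course also confirm directly). With that correction, the rest of the plan is sound, modulo the heavy elimination and integration that, as you note, must be delegated to a CAS --- exactly as the paper does.
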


\begin{proof}
\[ \Omega_1(t)= \left[{\frac {-a\cos 2t -8\,a\cos t+a}{
\cos 2t -9}},{\frac {-2\,a\sin 2t -4
\,a\sin t}{\cos 2t  -9}}\right]
\]
\[ \Omega_2(t)= \left[{\frac {-8\,a\cos t+a\cos 2t -a}{
\cos \left( 2\,u \right) -9}},{\frac {2\,a\sin 2t -4
\,a\sin t}{\cos 2t -9}}.\right]\]
Let $R(x,y)=(-x,y$. Then $\Omega_2(t)=(R\circ\Omega_1)(t)$.
The implicit form of $\Omega_2$ is given by 
\[ B_2(x,y)=a^2(a^2-2 a x-4 y^2)+2 a x (x^2+y^2)  -(x^2+y^2)^2=0.\]
Analogously,
$B_1(x,y)=B_2(-x,y)=0$ is the implicit form of $\Omega_1$.

The area of the region bounded by $\Omega_i$ is given by $\frac{1}{2}\int_{\Omega_i} xdy-ydx$. It follows that $A(\Omega_i)=\frac{\sqrt{5} \pi a^2}{5}$.  
\end{proof}

Figure~\ref{fig:brocard-antipodes} (right) depicts the loci of $\Omega_1$ and $\Omega_2$ with $P(t)$ on an ellipse with semi-axes $(a,b)$ and with $V_1=(-a,0)$ and $V_2=(a,0)$. These are a pair of symmetric teardrop curves whose complicated parametric equations we omit.

\begin{figure}
    \centering
    \includegraphics[width=\textwidth]{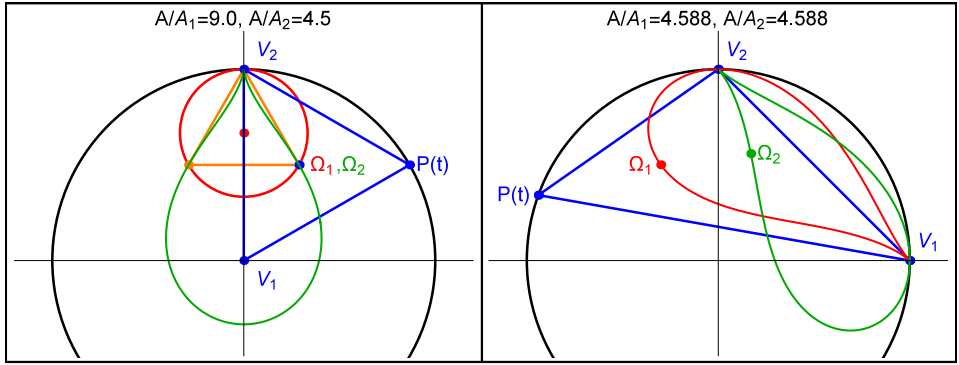}
    \caption{\textbf{Left:} $V_1$ and $V_2$ are affixed to the center and top vertex of the unit circle and a third one $P(t)$ revolves around the circumference. The locus of the Brocard points $\Omega_1,\Omega_2$ are a circle (red) and a teardrop (green) whose areas are 1/9 and 2/9 that of the generating circle. The sample triangle (blue) shown is equilateral, so the two Brocard points coincide. Notice the curves' two intersections along with the top vertex form an equilateral (orange). \textbf{Right:} $V_1,V_2$ are now placed at the left and top vertices of the unit circle. The Brocard points of the family describe to inversely-similar teardrop shapes. \href{https://youtu.be/Ms8jC9yOKU4}{Video}, \href{https://bit.ly/3jX6FII}{Live}}
    \label{fig:brocard-drops-mixed}
\end{figure}

\begin{figure}
    \centering
    \includegraphics[width=.925\textwidth]{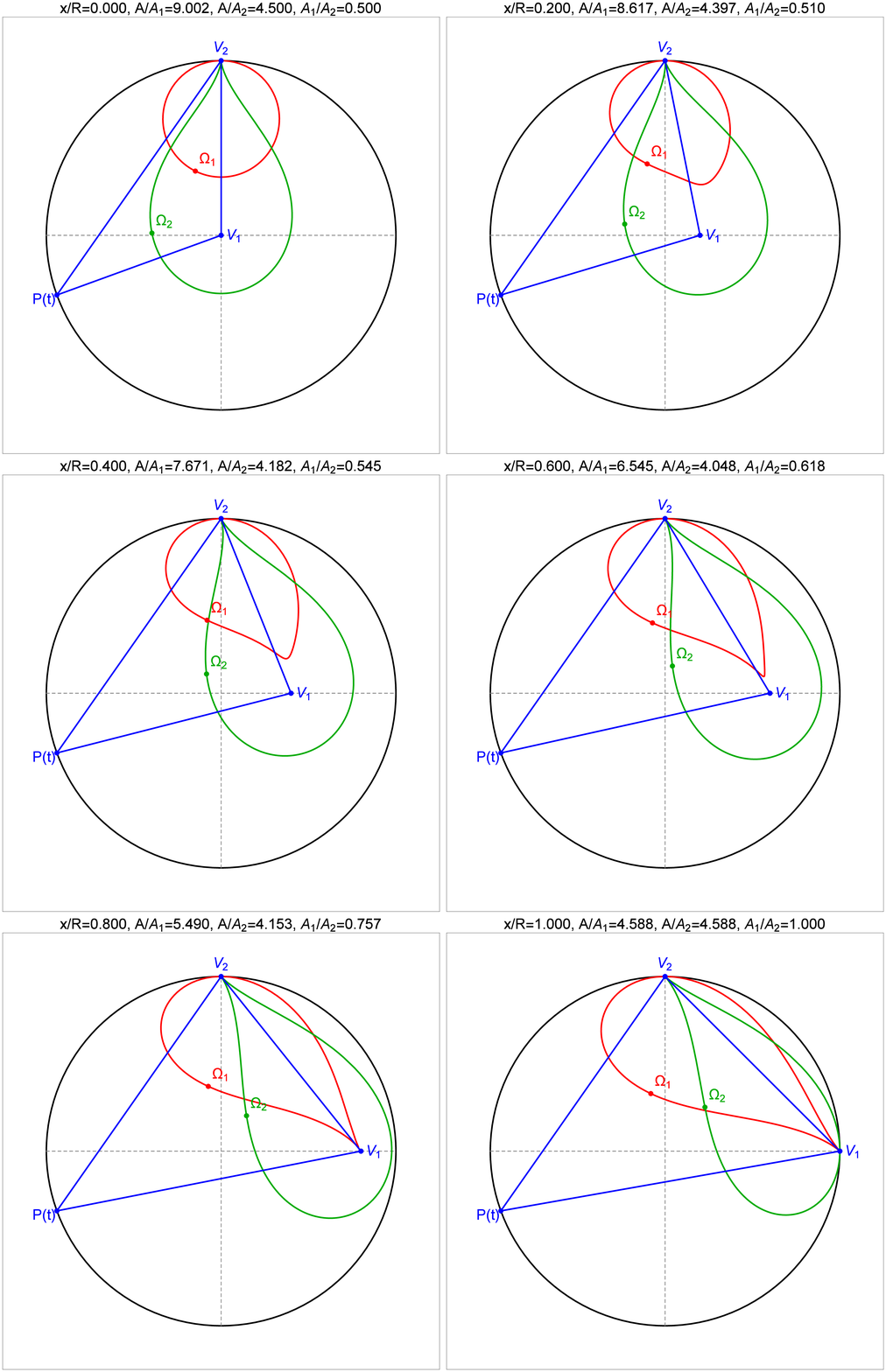}
    \caption{Shape of loci of Brocard Point $\Omega_1$ (red) and $\Omega_2$  (green) for $V_2$ fixed at $(0,1)$, as $V_1$ moves from the origin along the $x$ axis toward $(1,0)$. The loci are obtained over a complete revolution of $P(t)$ on a unit circle (black). \textbf{Top left:} $V_1=(0,0)$, the locus of $\Omega_1$ (resp. $\Omega_2)$ is a perfect circle (resp. a teardrop curve) of $1/9$ (resp. $2/9$) the area of the external. \textbf{Bottom right:} when $V_1=(a,0)$ the two loci are inversely-similar copies of each other, whose areas are $1/\sqrt{5}{\simeq}0.447$ that of the circle. \href{https://youtu.be/Ms8jC9yOKU4}{Video}, \href{https://bit.ly/3jX6FII}{Live}}
    \label{fig:brocard-circle}
\end{figure}

\begin{figure}
    \centering
    \includegraphics[width=\textwidth]{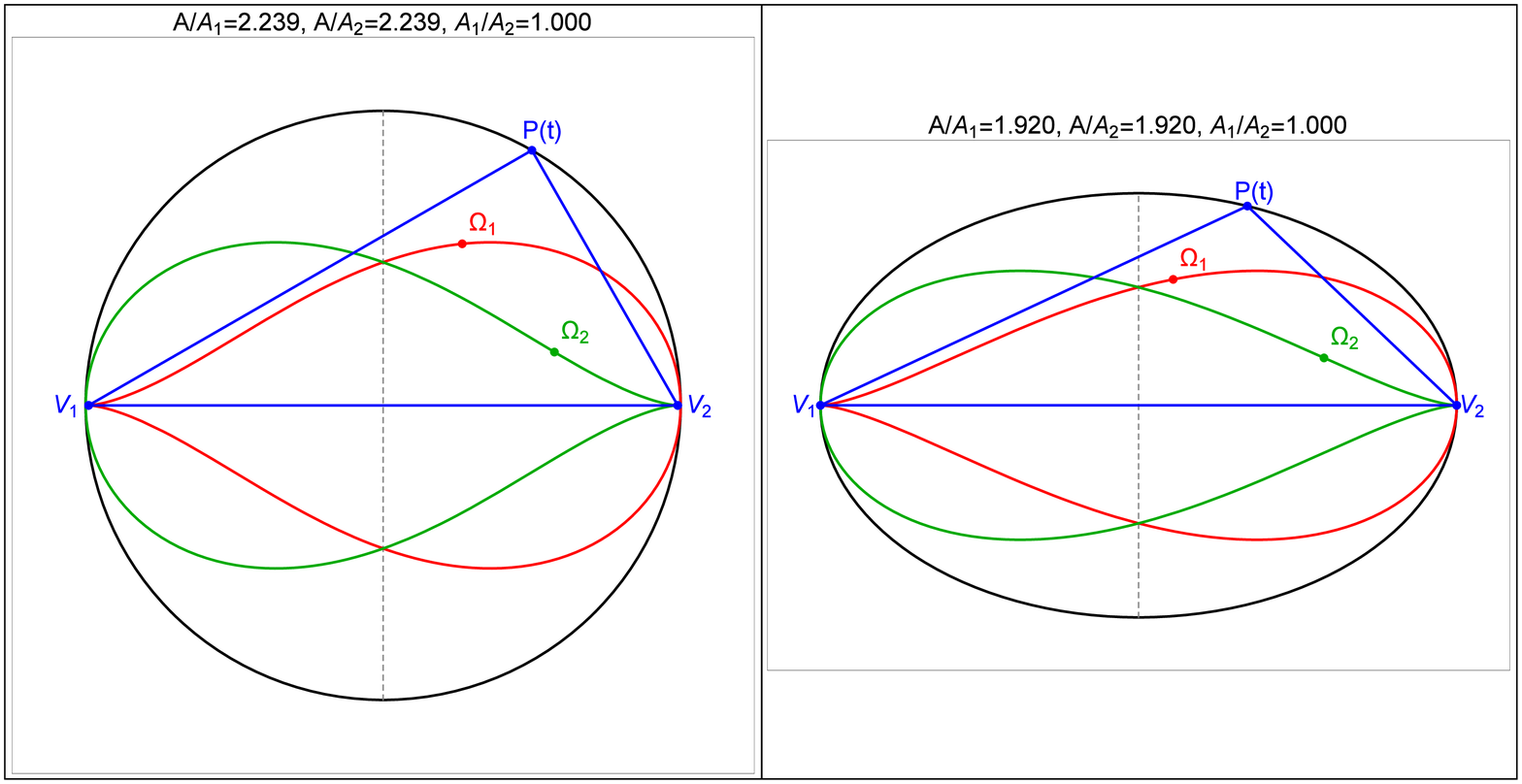}
    \caption{\textbf{Left:} With antipodal $V_1$ and $V_2$ and $P(t)$ revolving on the circumference, the loci of the Brocards are symmetric teardrops whose area are $1/\sqrt{5}$ that of the circle. \textbf{Right:}. With $V_1,V_2$ at the major vertices of an ellipse of axes $(a,b)$, and $P(t)$ revolving on its boundary, the the Brocard loci (red and green) are still symmetric (though stretched) teardrop shapes. In this case $a/b=1.5$. \href{https://bit.ly/3m53uQT}{Live}}
    \label{fig:brocard-antipodes}
\end{figure}

\begin{proposition}
The locus of $\Omega_1$ and $\Omega_2$ with $V_1=(x_1,0)$, $|x_1|\leq a$,  $V_2=(-a,0)$ and $V_3=(a\cos t,a\sin t) $ are a pair of singular  teardrop curves with the following   areas:
\begin{align*} A_1=& \,\frac { 4\left( x_1+a \right) ^{2}{a}^{5}\pi}{ \left( 3\,{a}^{
2}+x_1^{2} \right) ^{2}\sqrt {4\,{a}^{2}+x_1^{2}}}
\\
A_2=&\frac { \left( 2\,{a}^{2}-ax_1+x_1^{2} \right)  \left( x_1+a \right) ^{3}{a}^{2}\pi}{ \left( 3\,{a}^{2}+x_1^{2}
 \right) ^{2}\sqrt {4\,{a}^{2}+x_1^{2}}}
\end{align*}
\end{proposition}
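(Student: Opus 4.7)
The plan is to follow the same recipe as the preceding propositions. First I would derive explicit parametrizations of $\Omega_1(t)$ and $\Omega_2(t)$ by substituting $V_1=(x_1,0)$, $V_2=(-a,0)$, and $V_3=(a\cos t,a\sin t)$ into the standard formulas for the Brocard points in terms of vertex coordinates---equivalently, into the barycentric coordinates $(1/\beta^2:1/\gamma^2:1/\alpha^2)$ and $(1/\gamma^2:1/\alpha^2:1/\beta^2)$, where $\alpha,\beta,\gamma$ are the sidelengths opposite $V_1,V_2,V_3$. This directly generalizes the $x_1=a$ case treated earlier and produces each $\Omega_i(t)$ as a rational trigonometric expression in $t$ with $a$ and $x_1$ as parameters.

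I would then confirm the teardrop topology: as $t$ runs over $[0,2\pi)$ each locus is a closed curve, and at $t=\pi$ the moving vertex $V_3$ collapses onto $V_2=(-a,0)$, forcing a cusp. This yields the singular teardrop advertised in the statement, consistent with Figure~\ref{fig:brocard-antipodes} in the $x_1=a$ subcase.

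The areas then follow from the planimeter formula
\[ A_i \;=\; \tfrac{1}{2}\oint_{\Omega_i}(x\,dy - y\,dx) \;=\; \tfrac{1}{2}\int_0^{2\pi}\!\bigl(x_i(t)\,y_i'(t)-y_i(t)\,x_i'(t)\bigr)\,dt, \]
whose integrands are rational functions of $\sin t$ and $\cos t$. I would evaluate each integral either by the Weierstrass substitution $u=\tan(t/2)$ or, more cleanly, by promoting to a contour integral on the unit circle $z=e^{it}$ and applying the residue theorem; the distinctive factor $\sqrt{4a^2+x_1^2}$ appearing in both $A_1$ and $A_2$ should emerge from the modulus of the off-circle pole pair of the resulting rational function.

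The main obstacle is purely symbolic bookkeeping: the intermediate rational expressions are lengthy, and collapsing them into the compact closed forms in the statement is most efficiently handled by a computer algebra system. As consistency checks, setting $x_1=a$ must recover $A_1=A_2=\pi a^2/\sqrt{5}$ from the previous proposition, and the degenerate limit $x_1=-a$ (where $V_1=V_2$) must yield $A_1=A_2=0$; both are immediate from the stated formulas, providing confidence in the final simplification.
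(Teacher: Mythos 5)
Your proposal is correct and matches the paper's approach, which simply states that the areas are ``obtained with direct integration and simplification with a computer algebra system''; you have merely spelled out what that direct integration entails (parametrize the Brocard points, apply $\tfrac12\oint x\,dy-y\,dx$, evaluate the rational trigonometric integral, simplify by CAS). Your added consistency checks at $x_1=a$ and $x_1=-a$ are sound and go slightly beyond what the paper records.
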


When $x_1=a$, the ratio of $A1$ and $A2$ by the area of the circle $a^2\pi$ both reduce to $1/\sqrt{5}{\simeq}0.4472$.

\begin{proof} 
The above is obtained with direct integration and simplification with a computer algebra system (CAS).
\end{proof}

\section{Homothetic Pair Loci}
\label{sec:homot}
Consider an origin-centered ellipse with semi-axes $(a,b)$ and an internal concentric, axis-aligned one with semi-axes $(a',b')=(a/2,b/2)$. This pair is associated with a 3-periodic Poncelet porism since the condition $a'/a+b'/b=1$ is satisfied \cite{georgiev2012-poncelet}.

\begin{figure}
    \centering
\includegraphics[width=\textwidth]{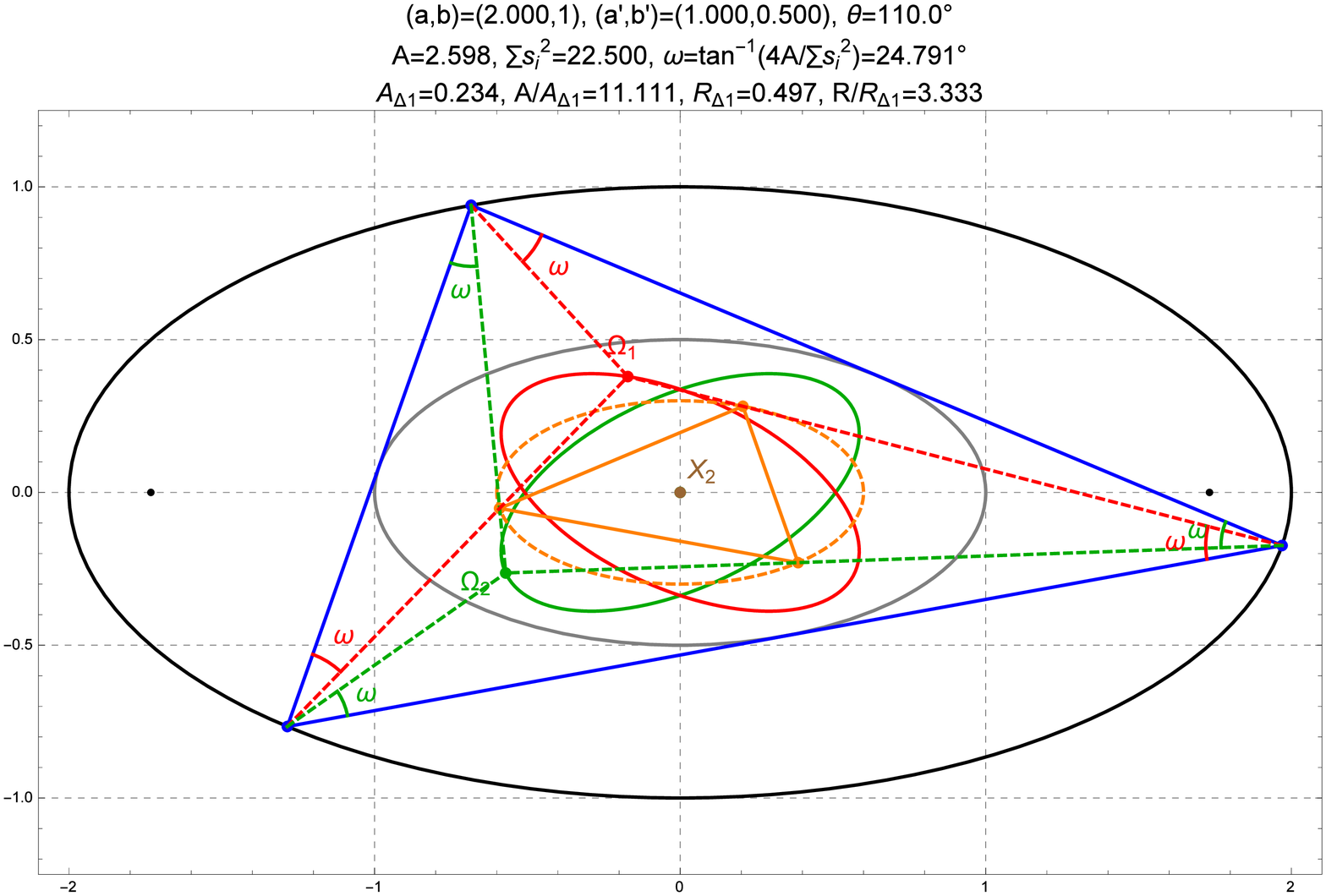}
\caption{The 1d Poncelet 3-periodic family interscribed in the homothetic pair conserves sum of squared sidelengths, area, and Brocard angle $\omega$ \cite{reznik2020-similarityII}. The loci of the two Brocard points $\Omega_1$ and $\Omega_2$ are tilted ellipses (red and green) of aspect ratio equal to those in the pair \href{https://youtu.be/2fvGd8wioZY}{Video}. The locus (dashed orange) of the vertices of the first Brocard triangle (orange) is an axis-aligned ellipse also homothetic to the pair.\href{https://youtu.be/13i3JGY-fK4}{Video}, \href{https://bit.ly/3iaISog}{App}}
    \label{fig:homot-loci}
\end{figure}

Referring to Figure~\ref{fig:homot-loci}:

\begin{proposition}
The loci of the Brocard points over 3-periodics in the homothetic pair are ellipses $E_1$ and $E_2$ which are reflected images of each other about either the $x$ or $y$ axis. Furthermore these are concentric and homothetic to the ellipses in the pair.
\end{proposition}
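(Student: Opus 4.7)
The plan is to parametrize the homothetic family explicitly and reduce the locus of $\Omega_1$ to the image of a circle under an $\mathbb{R}$-linear map via a roots-of-unity argument.

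Writing the outer semi-axes as $(\alpha,\beta)$ to avoid collision with the side labels $a,b,c$, the homothetic 3-periodics admit the parametrization
\[
P_j(t)=\bigl(\alpha\cos(t+\theta_j),\ \beta\sin(t+\theta_j)\bigr),\qquad \theta_j=\tfrac{2\pi(j-1)}{3},
\]
which works because at homothety ratio $1/2$ the midpoint of every side lies on the inner ellipse and coincides with the tangency point. Direct computation gives $a^{2}=3(\alpha^{2}\sin^{2}t+\beta^{2}\cos^{2}t)$ with cyclic $2\pi/3$-shifts for $b^{2},c^{2}$; from this, both $a^{2}+b^{2}+c^{2}$ (the known conserved quantity) and $a^{2}b^{2}+b^{2}c^{2}+c^{2}a^{2}$ are constant in $t$. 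The latter matters because $\Omega_{1}$ has the barycentric expression $\Omega_{1}\propto a^{2}c^{2}\,P_{1}+a^{2}b^{2}\,P_{2}+b^{2}c^{2}\,P_{3}$ whose denominator is exactly $a^{2}b^{2}+b^{2}c^{2}+c^{2}a^{2}$, with $\Omega_{2}$ sharing the same denominator; so only the numerators are dynamic.

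Identifying $\mathbb{R}^{2}\cong\mathbb{C}$ and setting $u=e^{it}$, $\zeta=e^{2\pi i/3}$, one has $P_{j}=r\,u\,\zeta^{j-1}+(s/u)\,\zeta^{-(j-1)}$ with $r=(\alpha+\beta)/2$ and $s=(\alpha-\beta)/2$, and the numerator of $\Omega_1$ is a Laurent polynomial in $u$ with exponents in $\{\pm1,\pm3,\pm5\}$. The critical observation is that $u\mapsto u\zeta$ cyclically relabels $(P_{1},P_{2},P_{3})$, so every triangle center is $2\pi/3$-periodic in $t$; in Fourier terms this forces the $u^{\pm1}$ and $u^{\pm5}$ coefficients to cancel, leaving
\[
\Omega_{1}(t)=\frac{C_{3}\,u^{3}+C_{-3}\,u^{-3}}{D},
\]
with $D$ constant. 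Setting $v=u^{3}$, the locus is the image of the unit circle under the $\mathbb{R}$-linear map $\mathbb{C}\to\mathbb{C}$ given by $v\mapsto(C_{3}v+C_{-3}\bar v)/D$, which is always an ellipse centered at the origin (no constant term arises) with semi-axes $(|C_{3}|\pm|C_{-3}|)/D$.

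Computing $|C_{\pm3}|^{2}$ and simplifying via $r^{2}+s^{2}=(\alpha^{2}+\beta^{2})/2$ and $rs=-(\beta^{2}-\alpha^{2})/4$ will yield $|C_{3}|=K\,r$ and $|C_{-3}|=K\,|s|$ for a common positive constant $K$, so the semi-axes come out in ratio $(r+|s|):(r-|s|)=\alpha:\beta$, matching the generating pair. For $\Omega_{2}$, the involution $t\mapsto -t$ (modulo vertex relabeling) reflects each triangle across the $x$-axis, and this orientation-reversing isometry swaps $\Omega_{1}$ and $\Omega_{2}$; hence $\Omega_{2}(t)$ is the $x$-axis reflection of $\Omega_{1}(-t)$ and the two loci are mirror images across that axis. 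The main obstacle is bookkeeping: confirming the vanishing of the $u^{\pm1},u^{\pm5}$ coefficients and the modulus simplifications for $|C_{\pm3}|$ — each step is a direct application of $1+\zeta+\zeta^{2}=0$, but the algebra is lengthy.
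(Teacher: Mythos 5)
Your proposal is correct, and it takes a genuinely different --- and considerably more self-contained --- route than the paper, whose proof consists of exhibiting the CAS-derived implicit quadrics $E_1,E_2$ without derivation. Your key structural inputs all check out: the parametrization $P_j(t)=(\alpha\cos(t+\theta_j),\beta\sin(t+\theta_j))$ does give the homothetic Poncelet family (the side midpoints land tangentially on the half-scale ellipse); $a^2=3(\alpha^2\sin^2 t+\beta^2\cos^2 t)$ with cyclic shifts is right; the constancy of $a^2b^2+b^2c^2+c^2a^2$ follows from the conserved $a^2+b^2+c^2$ and area via $16K^2=4e_2-e_1^2$, so the barycentric normalizer of $\Omega_1=(a^2c^2:a^2b^2:b^2c^2)$ is indeed static; and the $u\mapsto u\zeta$ cyclic-relabeling symmetry legitimately forces the Fourier spectrum of the (odd, degree-$5$) numerator down to $u^{\pm3}$, since $\Omega_1$ is invariant under cyclic (though not arbitrary) vertex permutations. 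This buys you, essentially for free, three things the paper only asserts or leaves implicit: concentricity (no $u^0$ term survives), the fact that the locus is an origin-centered ellipse (linear image of $|v|=1$), and the triple winding of the Brocard point per Poncelet revolution. Your reflection argument $\Omega_2(t)=R_x\Omega_1(-t)$ (the transposition $P_2\leftrightarrow P_3$ induced by $t\mapsto -t$ composes with $R_x$ so that the formulaic first Brocard point of the reflected triangle is $R_x$ of the second Brocard point of the original) is also cleaner than comparing the two implicit equations. The one step you defer --- that $|C_3|:|C_{-3}|=r:|s|$, hence semi-axis ratio $\alpha:\beta$ --- is a finite computation whose claimed outcome is consistent with the paper's formulas: the quadratic form of $E_1$ has determinant $\bigl(\tfrac{3a^4+10a^2b^2+3b^4}{ab(a^2-b^2)^2}\bigr)^2$ and trace $\tfrac{(a^2+b^2)(3a^4+10a^2b^2+3b^4)}{a^2b^2(a^2-b^2)^2}$, forcing eigenvalue ratio $a^2:b^2$. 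One shared caveat with the paper: since the loci carry an $xy$ cross term, ``homothetic'' should be read as ``similar in aspect ratio'' rather than axis-aligned homothety, which is exactly what your argument (and the paper's equations) establish.
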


\begin{proof}
The loci are given by
	
	\[E_1(x,y)= {\frac { \left( 7\,{a}^{4}+6\,{a}^{2}{b}^{2}+3\,{b}^{4} \right) {x}^{2
}}{{a}^{2} \left( a^2-b^2 \right) ^{2}   }}+{\frac {
 \left( 3\,{a}^{4}+6\,{a}^{2}{b}^{2}+7\,{b}^{4} \right) {y}^{2}}{{b}^{
2} \left( {a}^{2}-  {b}^{2} \right)^2 }}- \,{\frac {
4\sqrt {3} \left( {a}^{2}+{b}^{2} \right) xy}{ab \left( {a}^{2}-{b}^{2}
 \right) }}-1
	\]
	\[E_2(x,y)=	 {\frac { \left( 7\,{a}^{4}+6\,{a}^{2}{b}^{2}+3\,{b}^{4} \right) {x}^{2
}}{{a}^{2} \left( a^2-b^2 \right) ^{2}   }}+{\frac {
 \left( 3\,{a}^{4}+6\,{a}^{2}{b}^{2}+7\,{b}^{4} \right) {y}^{2}}{{b}^{
2} \left( {a}^{2}- {b}^{2} \right)^2 }}+ \,{\frac {
4\sqrt {3} \left( {a}^{2}+{b}^{2} \right) xy}{ab \left( {a}^{2}-{b}^{2}
 \right) }}-1
\]	
	The angle $\theta$ between the axes of ellipses $E_1$  and $E_2$ is given by
	\[\tan\theta = \frac{4\sqrt{3}(a^2+b^2) ab}{3a^4+2a^2b^2+3b^4}.\]
\end{proof}

In no other concentric Poncelet pairs studied so far (poristic, incircle, inellipse, dual, confocal) is the locus of either Brocard point an ellipse. 

\begin{remark}
At $a/b=\sqrt{5}$ the elliptic loci of the Brocard points over the homothetic family are internally tangent to the inner ellipse.
\end{remark}

\begin{remark}
At $a/b{\simeq}3.8$ the elliptic loci of the Brocard points over the homothetic family intersect the y axis at $b/2$, i.e., at the top vertex of the caustic.
\end{remark}

In Appendix~\ref{app:homot-broc-circle} we derive a few properties of the Brocard circle with respect to the homothetic family.

\section{Brocard Triangles}
\label{sec:broc-tris}
Consider a triangle $T=P_1{P_2}{P_3}$ with Brocard points $\Omega_1$ and $\Omega_2$. Referring to Figure~\ref{fig:first-broc-tri}:

\begin{definition}[First Brocard Triangle]
The vertices $P_1'$, $P_2'$, $P_3'$ of the First Brocard Triangle $T_1$ are defined as follows: $P_1'$ (resp. $P_2'$, $P_3'$) is the intersection of $P_2{\Omega_1}$ (resp. $P_3{\Omega_1}$, $P_1{\Omega_1}$) with $P_3{\Omega_2}$ (resp. $P_1{\Omega_2}$, $P_2{\Omega_2}$).
\end{definition}

Know properties of the $T_1$ include that (i) it is inversely similar to $T$, (ii) its barycenter $X_2$ coincides with that of the reference triangle, and (iii) its vertices are concyclic with $\Omega_1$, $\Omega_2$, $X_3$, and $X_6$ on the Brocard circle \cite[Brocard Circle]{mw}, whose center is $X_{182}$.

\begin{figure}
    \centering
    \includegraphics[width=.66\textwidth]{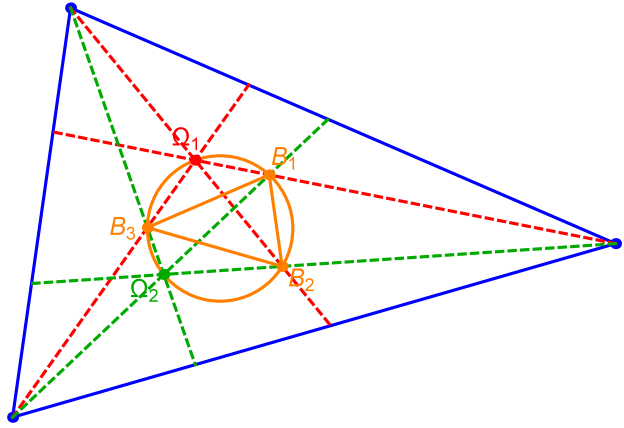}
    \caption{Construction for the First Brocard Triangle (orange) taken from \cite[First Brocard Triangle]{mw}. It is inversely similar to the reference one (blue), and their barycenters $X_2$ are common. Its vertices $B_1,B_2,B_3$ are concyclic with the Brocard points $\Omega_1$ and $\Omega_2$ on the Brocard circle (orange).}
    \label{fig:first-broc-tri}
\end{figure}

\subsection{Homothetic Pair}

Referring to Figure~\ref{fig:homot-loci}:

\begin{proposition}
Over 3-periodics in the homothetic pair, the locus of the vertices of $T_1$ is an axis-aligned, concentric ellipse, homothetic to the ones in the pair and interior to the caustic. Its axes are given by: 
\[ a'=\frac{a(a^2-b^2)}{2(a^2+b^2)},\;\;\; b'=\frac{b(a^2-b^2)}{2(a^2+b^2)}\]
\end{proposition}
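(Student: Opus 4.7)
The plan is to leverage two invariants of the homothetic 3-periodic family --- the centroid $X_2$ (fixed at the origin) and the Brocard angle $\omega$ (conserved by \cite{reznik2020-similarityII}) --- together with the properties of $T_1$ recalled earlier: it is inversely similar to $T$ with linear ratio depending only on $\omega$, and shares the centroid $X_2$. Over the homothetic family this similarity ratio is therefore constant, so each vertex $B_k$ of $T_1$ lies at a fixed scaled distance from the origin along a direction that rotates with $t$ according to the (a priori family-varying) axis of the inverse similarity. The remaining work is to pin down that rotation precisely enough to identify the locus curve.

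Concretely, I would take the standard parametrization $P_k(t)=(a\cos\theta_k,\,b\sin\theta_k)$ with $\theta_k=t+2\pi k/3$, $k=0,1,2$: this yields a Poncelet 3-periodic of the homothetic pair since the inner ellipse is the Steiner inellipse of each such triangle, tangent at the side midpoints, and the parametrization makes $X_2=(0,0)$ manifest and $t$-independent. With this explicit form, compute $\Omega_1(t),\Omega_2(t)$ as rational functions of $\cos t,\sin t$ following the same trilinear-to-Cartesian route used in the earlier proposition, and then assemble the First Brocard vertices
\[B_k(t)\;=\;P_{k+1}\Omega_1\cap P_{k+2}\Omega_2,\quad k=0,1,2,\]
as rational functions in $\cos t,\sin t$. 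A CAS simplifies these into tractable closed form.

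To close, substitute the three $B_k(t)$ into the candidate equation $x^2/(a')^2+y^2/(b')^2=1$ with $a'=a(a^2-b^2)/(2(a^2+b^2))$ and $b'=b(a^2-b^2)/(2(a^2+b^2))$, and verify the identity via CAS. Homothety to the pair ($a'/b'=a/b$) and strict interiority to the caustic ($a'<a/2$ since $(a^2-b^2)/(a^2+b^2)<1$) are then immediate. The main obstacle is the coincidence of the three branches: each $B_k$ inherits a phase shift $2\pi k/3$ from $P_k$, and only the precise rate at which the inverse-similarity axis from $T$ to $T_1$ rotates with $t$ absorbs those shifts so that all three vertices of $T_1$ trace the same axis-aligned ellipse rather than three rotated copies. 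Making this coincidence transparent --- whether through brute CAS simplification or via a direct argument on the rotation rate of that axis --- is the delicate step.
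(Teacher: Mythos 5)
Your proposal is correct and takes essentially the same route as the paper: the paper likewise argues the locus must be an ellipse because $T_1$ is inversely similar to the 3-periodic with a shared barycenter (and, via the conserved Brocard angle, a constant ratio), and then pins down the axes by explicit CAS parametrization of the vertices. The ``delicate step'' you flag --- all three vertices tracing one axis-aligned ellipse rather than three rotated copies --- is resolved by the identity the paper records immediately afterwards, $P_k' = k_1 R_x P_{k+1}$ with $k_1=\frac{a^2-b^2}{2(a^2+b^2)}$ and $R_x(x,y)=(-x,y)$, which exhibits each $T_1$ vertex as a fixed reflected scaling of a 3-periodic vertex and hence yields the stated $a'=k_1 a$, $b'=k_1 b$ directly.
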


\begin{proof}
The locus must be an ellipse since $T_1$ is inversely similar to the 3-periodics whose vertices are inscribed in an ellipse and their barycenters coincide. A vertex of the Brocard triangle is parametrized by

\[ \frac{x^2}{a'^2}+\frac{y^2}{b'^2} = 1 \]
 
It can be shown $a'<a/2$ and $b'<b/2$ therefore the locus is interior to the caustic, i.e., the stationary Steiner inellipse.
\end{proof}

Since homothetic 3-periodics conserve area \cite{reznik2020-similarityII}, so must $T_1$ (inversely similar). Its area can be computed explicitly:

\begin{remark}
Over 3-periodics in the homothetic pair, the area of $T_1$ is invariant and given by
\[ {\frac {3\sqrt{3} ab\left( {a}^{2}-{b}^{2} \right) ^{2}  }{16
 \left( {a}^{2}+{b}^{2} \right) ^{2}}}
\]
\end{remark}

\begin{corollary}
The similarity ratio of homothetic 3-periodics to the $T_1$ is invariant and given by
\[ \frac{ 2(a^2+b^2)}{ a^2-b^2}\cdot\]
\label{cor:sim-ratio}
\end{corollary}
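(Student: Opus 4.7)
The plan is to deduce the similarity ratio $k$ directly from the ratio of two invariant areas along the homothetic family. Because $T_1$ is inversely similar to $T$, a linear similarity multiplies areas by $k^2$, so $k^2 = \operatorname{area}(T)/\operatorname{area}(T_1)$. Both areas are invariant along the family, so $k$ must be invariant too, and this quotient determines it explicitly.

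The first ingredient is the invariant area of a 3-periodic in the homothetic pair. Citing \cite{reznik2020-similarityII} (or verifying directly at one convenient configuration), this area equals $\tfrac{3\sqrt{3}}{4}\,ab$. A short sanity check: place one vertex at $(a,0)$. The polar of $(a,0)$ with respect to the caustic $x^2/(a/2)^2 + y^2/(b/2)^2 = 1$ is the vertical line $x = a/4$, so the two tangents from $(a,0)$ meet the caustic at $(a/4,\pm b\sqrt{3}/4)$ and, when extended, meet the outer ellipse again at $(-a/2, \pm b\sqrt{3}/2)$. A one-line determinant on this isosceles 3-periodic yields $\tfrac{3\sqrt 3}{4}\,ab$, and invariance is the cited result.

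The second ingredient is already in hand: the preceding remark gives
\[
\operatorname{area}(T_1) = \frac{3\sqrt{3}\,ab\,(a^2-b^2)^2}{16(a^2+b^2)^2}.
\]
Forming the quotient,
\[
k^2 = \frac{\operatorname{area}(T)}{\operatorname{area}(T_1)} = \frac{\tfrac{3\sqrt 3}{4}\,ab}{\tfrac{3\sqrt 3 \,ab\,(a^2-b^2)^2}{16(a^2+b^2)^2}} = \frac{4(a^2+b^2)^2}{(a^2-b^2)^2},
\]
and taking the positive square root gives $k = 2(a^2+b^2)/(a^2-b^2)$, as claimed.

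The only subtle point is the orientation convention: even though $T_1 \to T$ reverses orientation, its linear scaling factor is positive, so the square root is unambiguous and no sign choice is needed. There is no serious obstacle beyond verifying the 3-periodic area invariant, which is either cited or handled by the one-vertex-at-$(a,0)$ computation above.
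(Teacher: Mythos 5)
Your derivation is correct: the paper states this corollary without an explicit proof, and reading it as a consequence of the two invariant areas --- the 3-periodic area $\tfrac{3\sqrt3}{4}ab$ (checked at the isosceles configuration with a vertex at $(a,0)$, which is a genuine 3-periodic since the third side $x=-a/2$ is tangent to the caustic) and the $T_1$ area from the preceding remark --- is exactly the natural reading, and your arithmetic giving $k^2=4(a^2+b^2)^2/(a^2-b^2)^2$ is right. Worth noting that the paper also supplies an even more direct route in the subsection that follows: the explicit vertex map $P_i'=k_1R_xP_j$ with $k_1=\tfrac{a^2-b^2}{2(a^2+b^2)}$ (equivalently, the axes $a',b'$ of the vertex locus, which are $k_1$ times $a,b$) exhibits $T_1$ as a reflected copy of the 3-periodic scaled by $k_1$, so the ratio is $1/k_1$ with no square root or area computation needed. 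Your approach buys independence from that explicit parametrization at the cost of invoking both area invariants; both are sound.
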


\subsection{Vertices of the $T_1$ over the homothetic family}

Let the ${P_1}{P_2}{P_3}$ be the vertices of a 3-periodic in the homothetic pair, and ${P'_1}{P'_2}{P'_3}$ those of $T_1$. These are given by:

\begin{align*}
P_1'=& k_1 R_x P_2\\
P_2'=&k_1 R_x P_3\\
P_3'=&k_1 R_x P_1
%
\end{align*}

where $k_1=\frac{a^2-b^2}{2(a^2+b^2)}$ and $R_x(x,y)=(-x,y)$ is a reflection.

\subsection{Brocard Porism}

This is a 3-periodic family inscribed in a fixed circumcircle and a fixed inellipse, known as the Brocard Inellipse \cite[Brocard Inellipse]{mw}. A remarkable property of this family is that the Brocard points are stationary at the foci of said inellipse \cite{bradley2007-brocard}.

Given the axes $(a,b)$ of the latter, we have shown elsewhere that the circumcenter $X_3$, circumradius $R$, and (conserved) Brocard angle $\omega$ of the family are given by \cite{reznik2020-similarityII}:

\begin{equation}
X_3=[0,-\frac{c\delta_1}{b}],\;\;\;R= \frac{2a^2}{b}, \;\;\; \cot\omega=  \frac{\delta_1}{b} \\
 \label{eqn:broc-circumcircle}
\end{equation}
where $\delta_1=\sqrt{4a^2-b^2}$.

Let $T_k$ denote the nth-Brocard triangle, $k=1,\cdots,7$, as defined in  \cite{gibert2020-brocard}. Referring to Figure~\ref{fig:broc-por-tris}

\begin{remark}
For $k=1,2,5,7$, the locus of vertices of $T_k$ trace out the Brocard circle.
\end{remark}

\begin{figure}
    \centering
    \includegraphics[width=\textwidth]{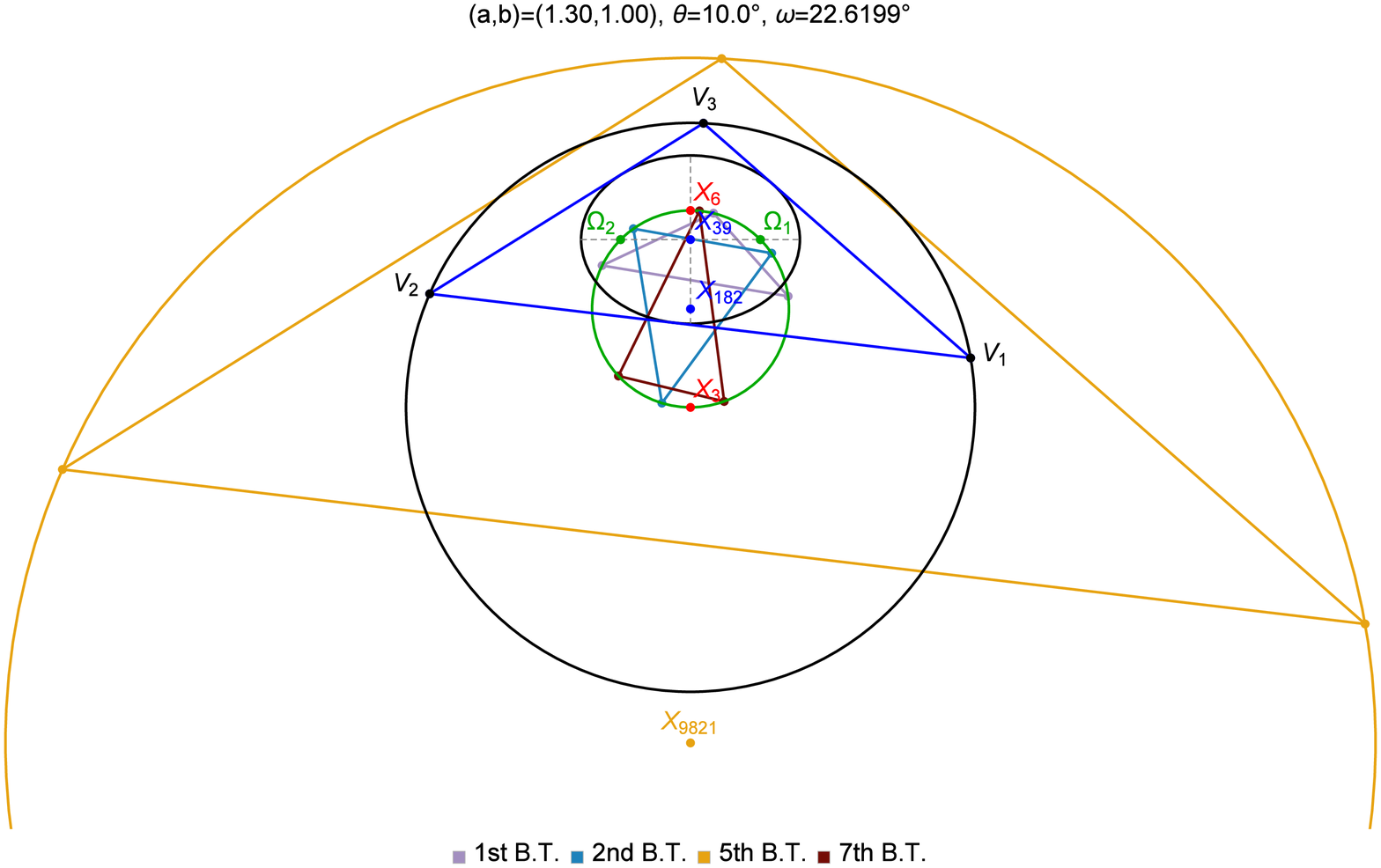}
    \caption{A 3-periodic (blue) in the Brocard Porism is shown inscribed in a circle and circumscribed about the Brocard inellipse (both black). Its Brocard points $\Omega_1$ and $\Omega_2$ are stationary at the foci of said inellipse. The First, Second, and Seventh Brocard Triangle are shown inscribed in the Brocard circle (green). The Fifth Brocard Triangle (orange)is homothetic about the 3-periodic and therefore its locus will also be a circle (not shown). \href{https://youtu.be/_bK-BCQv24A}{Video}, \href{https://bit.ly/32GFvQu}{Live}}
    \label{fig:broc-por-tris}
\end{figure}

This stems from the fact that by construction, $T_1$, $T_2$, $T_7$ are inscribed in the Brocard circle (their circumcenter is $X_{182}$ and $T_5$ is homothetic to the reference one and its circumcenter is $X_{9821}$ \cite{etc}. For no other Poncelet families and/or Brocard triangle combinations have we been able to identify conic loci for the Brocard Triangle vertices.

 As before, $\Omega_1,\Omega_2$ denote the Brocard points of the 3-periodics. Let $\Omega_1^j$ (resp. $\Omega_2^j$) denote the first (resp. second) Brocard point of $T_j$. Also let $X_i^j$ denote the $X_i$ center of $T_j$. The following observations are about the 3-periodic family in the Brocard porism.

\begin{observation}
$\Omega_1^2$ and $\Omega_2^2$ are stationary. Furthermore the triples  $(\Omega_1,X_6^2, \Omega_1^2)$ and $(\Omega_2,X_6^2,\Omega_2^2)$ are each collinear.
\end{observation}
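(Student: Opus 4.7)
The plan is to first upgrade the hypothesized stationarity of $\Omega_1, \Omega_2, X_3$ to stationarity of the entire Brocard circle, then use this to pin down $\Omega_1^2, \Omega_2^2$ as fixed points, and finally deduce the collinearity.

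For the Brocard circle: it passes through $X_3, X_6, \Omega_1, \Omega_2$ and the vertices of $T_1$. By the defining property of the Brocard porism the Brocard points $\Omega_1, \Omega_2$ are stationary at the foci of the Brocard inellipse, and by \eqref{eqn:broc-circumcircle} the circumcenter $X_3$ is also stationary. Three non-collinear points determine a unique circle, so the Brocard circle itself is a fixed circle of the configuration; consequently its center $X_{182}$ is stationary, and so is $X_6$, the antipode of $X_3$ on it.

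For the stationarity of $\Omega_1^2, \Omega_2^2$: I would use that $T_2$ is inscribed in the now-fixed Brocard circle and that every triangle in the porism has the same Brocard angle $\omega$. There is a classical direct similarity carrying the reference triangle $T$ onto $T_2$; because the Brocard circle (the circumcircle of $T_2$) is fixed and the Brocard angle is conserved, the center and ratio of this similarity are themselves stationary over the porism. Since the similarity sends Brocard points of $T$ to Brocard points of $T_2$, the fixed points $\Omega_1, \Omega_2$ are mapped to fixed points $\Omega_1^2, \Omega_2^2$. A plausible explicit identification I would test first is the swap $\Omega_1^2=\Omega_2$, $\Omega_2^2=\Omega_1$, which would make both points trivially stationary and would place $\Omega_1, \Omega_2, \Omega_1^2, \Omega_2^2$ on the single line through the foci of the Brocard inellipse. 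Failing a clean synthetic identification, the same conclusion can be reached by CAS: parametrize $P_1(t)$ on the fixed circumcircle, close the Poncelet 3-periodic via tangency to the Brocard inellipse, construct $T_2$ from cevians through $X_6$ meeting the Brocard circle, and check that the Brocard-point formulas simplify to $t$-independent expressions.

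For the collinearity: once $\Omega_i$ and $\Omega_i^2$ are known to be fixed, the lines $\Omega_1\Omega_1^2$ and $\Omega_2\Omega_2^2$ are fixed lines of the configuration, and the claim reduces to showing that the symmedian point $X_6^2$ of the moving triangle $T_2$ lies on both lines at every instant. Under the similarity of the previous step the point $X_6\in T$ is sent to $X_6^2\in T_2$; since $X_6$ is stationary and the similarity is stationary, $X_6^2$ is stationary too, and the collinearity becomes a finite check among fixed points. If instead the swap identification holds, both triples lie on the common line $\Omega_1\Omega_2$ and the check is just that $X_6^2$ always falls on this single axis of the Brocard inellipse, which I would expect from the bilateral symmetry of the Brocard-porism construction about that axis.

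The main obstacle is the middle step: isolating the precise direct similarity between $T$ and $T_2$ with a stationary center, or equivalently naming $\Omega_1^2, \Omega_2^2$ as fixed triangle centers of $T$, in a way that yields both the stationarity and the collinearity without symbolic brute force. If no clean synthetic route emerges, all three claims are $t$-independence checks that a CAS can dispatch in closed form.
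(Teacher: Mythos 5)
First, a framing point: the paper offers no proof of this statement at all --- it is labelled an \emph{Observation}, the authors' convention for experimentally verified facts --- so there is nothing to compare your argument against; you are attempting to supply a proof where none exists. Your opening step is correct and worth keeping: $\Omega_1$, $\Omega_2$ (the foci of the Brocard inellipse) and $X_3$ are stationary and non-collinear ($X_3$ lies on the perpendicular bisector of $\Omega_1\Omega_2$, away from their midpoint), all three lie on the Brocard circle, hence the Brocard circle is a fixed circle, and with it $X_{182}$ and $X_6$ (the antipode of $X_3$ on it) are stationary.

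The genuine gap is your middle step. There is no ``classical direct similarity carrying $T$ onto $T_2$'': the \emph{first} Brocard triangle is inversely similar to $T$, but the second is not similar to $T$ in general --- its vertex on the $A$-symmedian is the midpoint of the chord cut by the circumcircle on that symmedian (equivalently the foot of the perpendicular from $X_3$ to the symmedian, which is why it lies on the circle with diameter $X_3X_6$), and the triangle of these chord-midpoints does not share the angles of $T$ (in barycentrics its $A$-vertex is $b^2+c^2-a^2:b^2:c^2$, which even falls on side $BC$ when $A$ is a right angle). So the stationarity of $\Omega_1^2,\Omega_2^2$ cannot be transported from that of $\Omega_1,\Omega_2$ by a fixed similarity. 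Your fallback identification $\Omega_1^2=\Omega_2$, $\Omega_2^2=\Omega_1$ is also impossible: $\Omega_1,\Omega_2$ lie \emph{on} the Brocard circle, which is the circumcircle of $T_2$, whereas Brocard points are interior points of their triangle; moreover it would collapse the two collinearity claims into the single statement $X_6^2\in\Omega_1\Omega_2$, which is not what is being asserted. What survives is your closing CAS check ($t$-independence of the coordinates of $\Omega_1^2,\Omega_2^2$ plus two vanishing $3\times 3$ determinants), which, if actually carried out, would indeed upgrade the observation to a proposition; note also that once stationarity of $\Omega_1^2,\Omega_2^2$ is established by whatever means, your own first-step argument applied to the $T_2$-porism (whose circumcircle is the fixed Brocard circle, so $X_3^2=X_{182}$ is fixed) yields the stationarity of $X_6^2$ without any similarity, reducing the collinearity to a finite check among fixed points. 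As written, however, the proposal does not get past the evidentiary level of the paper itself.
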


Remarkably, the above implies a ``russian doll'' nesting of Brocard porisms composed of the original family and then successive Second Brocards, each inhabiting its own private porism, see this \href{https://youtu.be/Z3YlEbCFbnA}{video}. 

\begin{observation}
Both $\Omega_1^1$ and $\Omega_2^1$
move along the same circle $C_1$.
\end{observation}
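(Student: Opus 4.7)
The plan is to identify a fixed circle and a fixed distance from its center that both $\Omega_1^1$ and $\Omega_2^1$ must satisfy, using only data that is known to be invariant in the Brocard porism. The key ingredients are three facts already at hand in the paper: (a) $T_1$ is inversely similar to the reference 3-periodic and its vertices lie on the Brocard circle (center $X_{182}$); (b) by the Brocard porism definition, the circumcircle, the inellipse, the Brocard points $\Omega_1,\Omega_2$, and the Brocard angle $\omega$ are all stationary; (c) the classical identity $|O - \Omega_i| = R\sqrt{1-4\sin^2\omega}$ relating a triangle's circumcenter, circumradius, and either Brocard point.

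First I would verify that the Brocard circle of the reference 3-periodic is fixed throughout the porism. Since $X_3$ is the circumcenter of a fixed circumcircle and $\Omega_1,\Omega_2$ are the fixed foci of the Brocard inellipse, the three points $X_3,\Omega_1,\Omega_2$ are stationary, and since they all lie on the Brocard circle, this circle and its center $X_{182}$ are stationary as the 3-periodic varies. Let $r$ denote its (now fixed) radius.

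Next I would interpret $T_1$ from its own point of view. Because $T_1$'s vertices trace the Brocard circle (by the previous remark, the $k=1$ case), the Brocard circle of the reference triangle is simultaneously the circumcircle of $T_1$. Hence $T_1$ has circumcenter $X_{182}$ and circumradius $r$, both fixed. Inverse similarity with the reference triangle forces $T_1$ to have Brocard angle equal to $\omega$, which is itself fixed in the porism.

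Finally, applying the classical identity to $T_1$ gives
\[
|X_{182} - \Omega_1^1| \;=\; |X_{182} - \Omega_2^1| \;=\; r\sqrt{1-4\sin^2\omega},
\]
a constant independent of the 3-periodic chosen. Therefore $\Omega_1^1$ and $\Omega_2^1$ both lie on the circle $C_1$ centered at $X_{182}$ with this radius. The main obstacle, such as it is, is invoking (or re-deriving) the circumcenter-to-Brocard-point distance formula cleanly; if one wished to avoid citing it, one could instead use that in any triangle the two Brocard points are reflections of each other through the Brocard diameter $X_3X_6$ and lie on the Brocard circle, and combine this with the fact that the chord $\Omega_1\Omega_2$ has length $2R\sin\omega\sqrt{1-4\sin^2\omega}/\sin\omega$-type expressions to arrive at the same fixed distance from $X_{182}$.
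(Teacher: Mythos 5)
Your argument is correct, and it actually does more than the paper, which records this statement as an unproved experimental \emph{Observation} with no accompanying argument. Your chain of reasoning is sound at every link: $X_3$, $\Omega_1$, $\Omega_2$ are stationary in the porism and non-collinear (they are distinct points on the Brocard circle, with $\Omega_1,\Omega_2$ symmetric about the Brocard axis through $X_3$), so the Brocard circle of the reference triangle is a fixed circle with fixed center $X_{182}$ and fixed radius $r$; the concyclicity of the vertices of $T_1$ on that circle (property (iii) quoted in the paper) makes it the circumcircle of $T_1$; inverse similarity preserves the Brocard angle, which the porism conserves; and the classical identity $|O-\Omega_i|=R\sqrt{1-4\sin^2\omega}$ applied to $T_1$ then pins both $\Omega_1^1$ and $\Omega_2^1$ to the fixed circle centered at $X_{182}$ of radius $r\sqrt{1-4\sin^2\omega}$. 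This not only proves the observation but identifies $C_1$ explicitly (center and radius), which the paper does not do. The only weak spot is the alternative route you sketch at the end to avoid the distance formula: the chord-length expression you write is garbled (the $\sin\omega$ factors cancel as written) and that sketch would need to be made precise before it could replace the main argument; but since it is offered only as an optional detour, it does not affect the validity of the proof.
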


\begin{observation}
Both $\Omega_1^6$ and $\Omega_2^6$
move along the same circle $C_6$.
\end{observation}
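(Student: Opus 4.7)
The plan is to reduce the observation to a pair of CAS-verifiable identities once the right invariant circle has been identified. First I would fix the Brocard porism by its parameters $(a,b)$ as in (\ref{eqn:broc-circumcircle}) and parametrize a generic 3-periodic $T(t)=P_1(t)P_2(t)P_3(t)$ by letting $P_1$ traverse the fixed circumcircle of radius $R=2a^2/b$ centered at $X_3=(0,-c\delta_1/b)$, with $P_2,P_3$ determined by the Poncelet closure against the Brocard inellipse. Because $\Omega_1,\Omega_2$ are stationary at the foci of that inellipse \cite{bradley2007-brocard}, every quantity downstream is a rational function of $t$ (after the usual trigonometric substitution) and of $(a,b)$.

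Next I would construct $T_6(t)$ from $T(t)$ using the definition in \cite{gibert2020-brocard}, and then write the two Brocard points $\Omega_1^6(t),\Omega_2^6(t)$ of $T_6$ by the standard cevian-concurrence formulas applied to $T_6$. The candidate circle $C_6$ is most naturally the \emph{Brocard circle of $T_6$}, since every triangle's Brocard pair lies on its own Brocard circle; the observation then amounts to the claim that this Brocard circle is stationary as $t$ varies. Equivalently, it suffices to show that the circumcenter $X_3^6(t)$ and the symmedian point $X_6^6(t)$ of $T_6$ are both independent of $t$, since these two points determine the Brocard circle (with $X_{182}^6$ as midpoint of the segment $X_3^6 X_6^6$).

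Once stationarity of $C_6$ is established, one still has to rule out that $\Omega_1^6$ and $\Omega_2^6$ merely trace proper arcs rather than the full circle, and to verify that the two arcs together cover the same circle. A clean structural way to do this is to exhibit an involution $\sigma$ of the porism parameter, most likely the reflection induced by the symmetry axis of the Brocard inellipse that swaps the two foci, and check that $\Omega_2^6(t)=\Omega_1^6(\sigma(t))$. The involution $\sigma$ exchanges $\Omega_1\leftrightarrow\Omega_2$ on the reference triangle, and Brocard triangles defined by the symmetric cevian construction respect this swap; so the identity should follow from symbolic symmetry considerations rather than a separate calculation.

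The main obstacle is algebraic bulk: $T_6$ is not an elementary construction on $T$, and the chain of rational substitutions needed to reach $\Omega_1^6,\Omega_2^6$ and then $X_3^6,X_6^6$ is heavy; a naive CAS evaluation will almost certainly need targeted simplification (using the Poncelet closure relation and the constancy of $\omega$) to collapse to the desired constants. If the structural shortcut via $\sigma$ works, one only needs to verify stationarity of $X_3^6$ and $X_6^6$ and the single swap identity; if it does not, one falls back on directly checking that $\Omega_1^6(t)$ and $\Omega_2^6(t)$ each satisfy the implicit equation of the (guessed) circle $C_6$, which is an identity in $t$ and can be discharged by a CAS.
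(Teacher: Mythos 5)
First, note that the paper does not prove this statement at all: it is presented as an \emph{experimental observation} about the Brocard-porism family, with no supporting computation in the text. So your proposal is attempting strictly more than the paper does, and should be judged on its own merits as a verification strategy.

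As such, it is a reasonable plan but it has two soft spots. First, the identification of $C_6$ with the Brocard circle of $T_6$ is a guess layered on top of another guess (that $X_3^6$ and $X_6^6$ are stationary). Neither is forced by the observation: the two Brocard points of $T_6$ could trace a fixed circle while the Brocard circle of $T_6$ itself varies, since the latter is pinned down by four concyclic points, two of which ($X_3^6$, $X_6^6$) you have not shown to be fixed. Your fallback (fit a circle numerically and verify the implicit equation in $t$ by CAS) is the robust route and is really the whole proof; the ``structural'' route is optional scaffolding. Second, the involution identity as you state it, $\Omega_2^6(t)=\Omega_1^6(\sigma(t))$, cannot be right as written: the relevant symmetry is the ambient reflection $R$ across the axis through the two foci's perpendicular bisector (the symmetry axis common to the circumcircle and the Brocard inellipse), which maps the family to itself while reversing orientation, so the correct identity is $\Omega_2^6(t)=R\bigl(\Omega_1^6(\sigma(t))\bigr)$. (For the reference triangle itself your version would assert that the two stationary foci coincide.) Even in corrected form, this only shows the locus of $\Omega_2^6$ is the mirror image of the locus of $\Omega_1^6$; to conclude they are the \emph{same} circle you still need the locus of $\Omega_1^6$ to be a circle centered on that symmetry axis, which again sends you back to the direct CAS check. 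In short: the skeleton is sound, but every load-bearing step is deferred to an unexecuted computation, and the one structural identity you do assert needs the reflection inserted to be true.
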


\begin{observation}
The locus of $\Omega_1^4$ and $\Omega_2^4$ are two distinct circles $C_4$ and $C_4'$.
\end{observation}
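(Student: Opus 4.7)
The plan is to verify the claim by explicit CAS computation: parametrize the 3-periodic family in the Brocard porism, construct $T_4$, compute its Brocard points, and then show that the resulting coordinates satisfy (distinct) fixed circle equations.

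First I would parametrize the 3-periodic family using equation~\eqref{eqn:broc-circumcircle}. With stationary circumcenter $X_3=(0,-c\delta_1/b)$ and circumradius $R=2a^2/b$, a vertex $P_1=X_3+R(\cos t,\sin t)$ on the circumcircle, together with the Poncelet closure condition against the Brocard inellipse (axes $(a,b)$, foci at the stationary points $\Omega_1,\Omega_2$), uniquely determines $P_2(t),P_3(t)$ as rational trigonometric functions of $t$. Next, following the construction in \cite{gibert2020-brocard}, I would express the vertices $P_i^4$ of $T_4$ as rational functions of $P_1,P_2,P_3$ together with the stationary points $\Omega_1,\Omega_2,X_3$. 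Feeding these into the standard formula for the Brocard points of $T_4$ then yields $\Omega_1^4(t)$ and $\Omega_2^4(t)$ explicitly as rational expressions in $\cos t,\sin t$.

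To prove each locus is a circle, I would substitute the coordinates into the generic form $x^2+y^2+Dx+Ey+F$ and search for constants $D,E,F$ (independent of $t$) that make the expression vanish identically. Clearing denominators and collecting powers of $\cos t$ and $\sin t$ reduces this to a small linear system over $(D,E,F)$; solvability, together with positivity of the resulting $D^2+E^2-4F$, certifies a genuine circle $C_4$ (resp.\ $C_4'$). Distinctness of the two circles then reduces to comparing the triples $(D,E,F)$ obtained for $\Omega_1^4$ versus $\Omega_2^4$ and checking they differ—an inspection visibly consistent with Figure~\ref{fig:broc-por-tris} and contrasting the behavior with Observations~2 and~3, where the two Brocard points of $T_k$ happen to share the same locus.

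The main obstacle will be algebraic bulk: $T_4$ is itself built from cevians and intersections involving the Brocard points, so the nested rational functions for $\Omega_i^4(t)$ can become large. The crucial simplifier is that $\Omega_1,\Omega_2,X_3$ and the Brocard angle $\omega$ are all invariants of the porism and should be substituted as constants early, before the Brocard-point formula is applied. As a sanity check, one may then look for a synthetic explanation via a rotating similarity fixing the two computed centers and mapping $\Omega_1,\Omega_2$ onto $C_4,C_4'$; even without such a mapping, the CAS identity on $(D,E,F)$ suffices to close the proof.
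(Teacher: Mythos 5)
The paper offers no proof of this statement at all: it is labeled an \emph{observation} and, like the other three observations about the Brocard porism, is reported as an experimental finding. Your plan is therefore not competing with a written argument but proposing to supply one, and the strategy you describe --- parametrize the porism via Equation~\eqref{eqn:broc-circumcircle} and Poncelet closure against the Brocard inellipse, build $T_4$ from Gibert's construction, compute its Brocard points symbolically, and fit $x^2+y^2+Dx+Ey+F=0$ --- is exactly the CAS methodology the authors use elsewhere (e.g.\ the area formulas ``obtained with direct integration and simplification with a CAS''). Two exploitable symmetries would lighten the algebraic bulk you worry about: the whole configuration is symmetric about the $y$-axis, so the centers of $C_4$ and $C_4'$ must lie on it (forcing $D=0$ and reducing the fit to two unknowns), and the map swapping $\Omega_1\leftrightarrow\Omega_2$ is realized by $x\mapsto -x$, so $C_4'$ is the reflection of $C_4$ and only one locus need be computed; distinctness then amounts to checking the common center does not make the reflection trivial, i.e.\ that the locus is not itself symmetric in $x$.

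The one genuine gap is that your proposal is a program, not an execution. The entire mathematical content of the observation is the claim that the linear system for $(D,E,F)$ is \emph{consistent} --- that after clearing denominators the coefficients of the various powers of $\cos t,\sin t$ can be simultaneously annihilated by a single choice of constants. Nothing in the setup guarantees this a priori (for most derived triangles over most Poncelet families the Brocard-point loci are not conics, as the authors themselves note), so until the system is actually solved and the identity verified, the circle claim remains unestablished. A practical refinement: rather than solving a large overdetermined system, evaluate $\Omega_1^4(t)$ at three sample parameters to determine the unique candidate circle numerically, then verify the resulting polynomial identity in $t$ symbolically; this separates the guess from the certification and is far cheaper in a CAS.
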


\section{Conclusion}
\label{sec:conclusion}
A few questions are posed to the reader:

\begin{itemize}
    \item Are there other Poncelet ellipse pairs, concentric or not, whose Brocard points of their 3-periodics trace out ellipses? (so far he have found none)
    \item In the homothetic pair are there other derived triangles (besides the First Brocard), non-homothetic to the 3-periodics, whose vertices trace out conics?
\end{itemize}

A list of animations of some of the results above appears on Table~\ref{tab:videos}. 

\begin{table}[H]
\scriptsize
\begin{tabular}{|c|c|l|l|}
\hline
Exp & Video & App & Title \\
\hline
01 & \href{http://youtu.be/Ms8jC9yOKU4}{*} & \href{https://bit.ly/3jX6FII}{*} & 
\makecell[lt]{Loci of Brocard Points on Circle-Mounted\\ Triangles: Center-Top} \\
02 & n/a & \href{https://bit.ly/3m53uQT}{*} & 
\makecell[lt]{Loci of Brocard Points on Circle-Mounted\\ Triangles: Left-Right} \\
03 & \href{https://youtu.be/13i3JGY-fK4}{*} & \href{https://bit.ly/3iaISog}{*} & \makecell[lt]{Loci of the Brocard Points and Vertices of\\ 1st Brocard Triangle over the  Homothetic Family} \\
04 & \href{https://youtu.be/_bK-BCQv24A}{*} & \href{https://bit.ly/32GFvQu}{*} & \makecell[lt]{Circular locus of Brocard Triangles, $T_j$, $j=1,2,5,7$} \\
05 & \href{https://youtu.be/Z3YlEbCFbnA}{*} & n/a & \makecell[lt]{Russian-Doll Nesting of Second Brocard Triangles} \\
\hline
\end{tabular}
\caption{Experimental animations. Click on the * to see it as a {YouTube} video and/or a browser-based simulation.}
\label{tab:videos}
\end{table}

\noindent We would like to thank Robert Ferréol for his Mathcurve portal (it inspired this work) as well as early derivations of loci. Mark Helman has kindly helped us with experiments and research around the Brocard loci. Bernard Gibert and Peter Moses for helping us usher the 7th Brocard Triangle into existence. The first author is fellow of CNPq and coordinator of Project PRONEX/ CNPq/ FAPEG 2017 10 26 7000 508.

\appendix
\section{Brocard Circle of the Homothetic Pair}
\label{app:homot-broc-circle}
Recall the Brocard circle is the circumcircle of the $T_1$ \cite[Brocard Circle]{mw}.

\begin{corollary} Over 3-periodics in the homothetic pair, the ratio of areas of the circumcircle of 3-periodics to that of the Brocard circle is invariant and given by
\[  \frac{4(a^2+b^2)^2}{(a^2-b^2)^2} \cdot\]
\end{corollary}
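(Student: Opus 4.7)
The plan is to derive this as an immediate consequence of Corollary~\ref{cor:sim-ratio} together with the defining property of the Brocard circle. Since the Brocard circle of a triangle is by definition the circumcircle of its First Brocard triangle $T_1$, the ratio we want is precisely the ratio of the area of the circumcircle of the 3-periodic to the area of the circumcircle of $T_1$.

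First, recall that $T_1$ is inversely similar to the reference 3-periodic. Corollary~\ref{cor:sim-ratio} already records that the similarity ratio between the 3-periodic and $T_1$ is the invariant $k = \frac{2(a^2+b^2)}{a^2-b^2}$. Since similarity scales every length (and in particular the circumradius) by the same factor, the ratio of circumradii $R/R_1$ equals $k$. Squaring gives the ratio of circumcircle areas as $k^2 = \frac{4(a^2+b^2)^2}{(a^2-b^2)^2}$, which is manifestly constant along the porism.

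As a sanity check I would verify the formula at one convenient configuration, for instance the isoceles 3-periodic that occurs when one vertex sits at a vertex of the outer ellipse: compute $R$ directly from the sidelengths, compute $R_1$ from the explicit coordinates of the vertices of $T_1$ obtained via the parametrization given in Section~4.2, and confirm $(R/R_1)^2$ matches the claimed value.

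The only potential subtlety, rather than an obstacle, is invoking the right notion of ``similarity ratio'' for inversely similar triangles. Inverse similarity still preserves absolute ratios of lengths, so the ratio of circumradii is unambiguously $|k|$ and its square is unchanged by the reflection component; this justifies squaring without sign issues.
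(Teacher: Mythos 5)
Your proof is correct and follows essentially the same route as the paper: both derive the area ratio as the square of the invariant similarity ratio from Corollary~\ref{cor:sim-ratio}, using the fact that the Brocard circle is the circumcircle of the inversely similar triangle $T_1$. Your explicit remark that inverse similarity still scales the circumradius by $|k|$ is a small but welcome clarification that the paper leaves implicit.
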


\begin{proof}
The result follows from Corollary~\ref{cor:sim-ratio}. Straightforward calculations yield that the circumradius of 3-periodics is given by: 
\[ R^2= -\frac{(a^2-b^2)^3 \cos(6 t)}{32a^2b^2}+\frac{(a^2+b^2)(a^4+14a^2b^2+b^4)}{32a^2b^2}.\]
\end{proof}

\begin{proposition}
Over the homothetic family, the Brocard circle is given by:

\[ E(x,y)= \,{\frac { 4\left( {a}^{2}+{b}^{2} \right) ^{2}{x}^{2}}{{a}^{2}
 \left( a^2-b^2 \right) ^{2}  }}+ \,{\frac {4 \left( 
{a}^{2}+{b}^{2} \right) ^{2}{y}^{2}}{{b}^{2} \left( a^2-b^2 \right) ^{2}}}-1
\]
\end{proposition}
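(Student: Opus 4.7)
The plan is to identify the ellipse $E(x,y)=0$ with the vertex locus of the first Brocard triangle $T_1$ already computed in the Homothetic Pair subsection, and then invoke the recalled fact that the Brocard circle is the circumcircle of $T_1$. Inspection of $E$ shows it is the axis-aligned, origin-centered ellipse with semi-axes $a'=a(a^{2}-b^{2})/[2(a^{2}+b^{2})]$ and $b'=b(a^{2}-b^{2})/[2(a^{2}+b^{2})]$, since $4(a^{2}+b^{2})^{2}/[a^{2}(a^{2}-b^{2})^{2}]=1/(a')^{2}$ and analogously for $b'$. These are exactly the semi-axes obtained earlier for the vertex locus of $T_1$.

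Concretely, I would first cite that earlier proposition to get the implicit equation $x^{2}/(a')^{2}+y^{2}/(b')^{2}=1$, and then recall from the opening line of the appendix that the Brocard circle is the circumcircle of $T_1$, so at every instant of the homothetic family the Brocard circle passes through three points lying on $E$. For an independent verification, I would start from the explicit formulas $P_i'=k_1 R_x P_{\sigma(i)}$ with $k_1=(a^{2}-b^{2})/[2(a^{2}+b^{2})]$ and $R_x(x,y)=(-x,y)$ supplied earlier, observe that each reference vertex sweeps the outer ellipse $x^{2}/a^{2}+y^{2}/b^{2}=1$ and that therefore its image under $k_1 R_x$ sweeps the ellipse with semi-axes $(k_1 a, k_1 b)=(a',b')$, and then substitute into $x^{2}/(a')^{2}+y^{2}/(b')^{2}-1=0$ to recover the stated $E(x,y)=0$ verbatim.

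The main obstacle is interpretative rather than computational. The phrase ``the Brocard circle is given by $E$'' must be read as ``the ellipse through the three instantaneous defining points of the Brocard circle --- namely the vertices of $T_1$ --- is $E$''; that is, $E$ is the implicit form of the $T_1$ vertex locus. Once this identification is made, no genuine geometric or algebraic difficulty remains, and the result follows immediately from the earlier vertex-locus proposition together with the elementary algebraic rewriting of the semi-axes above.
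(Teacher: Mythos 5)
The paper states this proposition without any proof, so there is nothing to compare against line by line; judged on its own, your argument is correct and is surely the intended one. Your reading of the (admittedly loosely worded) statement is the right one: $E(x,y)=0$ is exactly $x^2/a'^2+y^2/b'^2-1=0$ with $a'=\frac{a(a^2-b^2)}{2(a^2+b^2)}$, $b'=\frac{b(a^2-b^2)}{2(a^2+b^2)}$, i.e.\ the locus of the vertices of $T_1$ already established in the homothetic-pair section, and the Brocard circle is by definition the circumcircle of $T_1$, so $E$ is the fixed conic carrying the three defining points of the (varying) Brocard circle. Your independent verification via $P_i'=k_1R_xP_{\sigma(i)}$ is sound: the linear map $k_1R_x$ sends the outer ellipse to the ellipse with semi-axes $(k_1a,k_1b)=(a',b')$, and the algebra $1/a'^2=4(a^2+b^2)^2/[a^2(a^2-b^2)^2]$ checks out. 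As a further consistency check you could note that the same map explains the preceding Corollary: the Brocard circle is the image of the circumcircle under $k_1R_x$, whence the area ratio $1/k_1^2=4(a^2+b^2)^2/(a^2-b^2)^2$.
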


\bibliographystyle{maa}
\bibliography{references,authors_rgk}

\end{document}